\newtheorem{theorem}{Theorem}
\newtheorem{definition}[theorem]{Definition}
\newtheorem{lemma}[theorem]{Lemma}
\newtheorem{remark}[theorem]{Remark}
\begin{document}
\title[New Identities for Padovan Numbers]{New Identities for Padovan Numbers}
\author{Gamaliel Cerda-Morales}
\address{Departamento de Matem\'atica, Facultad de Ciencias F\'isicas y Matem\'aticas, Universidad de Concepci\'on, Casilla 160-C, Concepci\'on, Chile.}
\email{gamaliel.cerda@usm.cl}
\urladdr{http://orcid.org/0000-0003-3164-4434}

\subjclass[2010]{Primary 11B37, 11B39,  Secondary 15A36.}
\keywords{$a$ columns Padovan table, Padovan numbers, third-order recurrence, arithmetic subscripts.}

\begin{abstract}
In \cite{Choi-Jo}, the $am+b$ ($0\leq b<a$) subscripted Tribonacci numbers are studied. This work is devoted to study a new generalization of Fibonacci numbers called Padovan numbers. In particular, the $am+b$ subscripted Padovan numbers will be expressed by three $a$ step apart Padovan numbers for any $0\leq b<a$, where $a\in \mathbb{Z}$.
\end{abstract}

\maketitle

\section{Introduction}
In Fibonacci numbers, there clearly exists the term golden ratio which is defined as the ratio of two consecutive of Fibonacci numbers that converges to $\alpha=\frac{1+\sqrt{5}}{2}$. In a similar manner, the ratio of two consecutive Padovan 
numbers converges to $$\rho=\sqrt[3]{\frac{1}{2}+\frac{1}{6}\sqrt{\frac{23}{3}}}+\sqrt[3]{\frac{1}{2}-\frac{1}{6}\sqrt{\frac{23}{3}}}\in \mathbb{R}.$$

There are so many studies in the literature that concern about the generalized Fibonacci numbers such as Tribonacci, third-order Jacobsthal and Padovan numbers (see, for example \cite{Cer,Cer1,Choi-Jo,Fen,Irm-Alp,Sha-And-Hor}). Although the study of Padovan numbers started in the beginning of 19 century under different names, the master study was published in 2006 by Shannon et al. in \cite{Sha-And-Hor}. In the above paper, the authors defined the Padovan $\{P_{n}\}_{n\geq0}$ sequence as 
\begin{equation}
P_{n+3}=P_{n+1}+P_{n},\ P_{0}=P_{1}=P_{2}=1.
\end{equation}

A number of properties of the Tribonacci sequence were studied in \cite{Choi-Jo} by Tribonacci table method. Let $a\in \mathbb{N}$ and $m\geq0$, in this work we shall generalize the identities about $am$ subscripted Padovan numbers $P_{am}$ to any $P_{am+b}$ ($1\leq b\leq a$). One of our main theorem is to express $P_{am+b}$ by $P_{2a+b}$, $P_{a+b}$ and $P_{b}$, which are $a$ step apart terms.

\section{Padovan table}
For $a\in \mathbb{N}$, when we say $a$ columns Padovan table we mean a rectangle shape having $a$ columns that consists of the all Padovan numbers from $P_{1}$ in order. So,
\begin{equation*}
\left|
\begin{array}{cccc}
P_{1} & P_{2} & ... & P_{a} \\ 
P_{a+1} & P_{a+2} & ... & P_{2a} \\ 
P_{2a+1} & P_{2a+2} & ... & P_{3a} \\ 
... & ... & ... & ...%
\end{array}%
\right| .
\end{equation*}%
We shall investigate a third-order linear recurrence $P_{n}=\rho
_{a}P_{n-a}+\sigma _{a}P_{n-2a}+P_{n-3a}$ for Padovan numbers with some $%
\rho _{a},\sigma _{a}\in \mathbb{Z}$. In the next result, we show some known results for Padovan numbers using the inductive method.

\begin{lemma}\label{lem1} 
Let $P_{n}$ the $n$-th Padovan number, then $P_{n}=2P_{n-2}-P_{n-4}+P_{n-6},$ $P_{n}=3P_{n-3}-2P_{n-6}+P_{n-9}$ and $P_{n}=2P_{n-4}+3P_{n-8}+P_{n-12}.$
\end{lemma}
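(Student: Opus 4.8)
The plan is to obtain all three identities directly from the defining recurrence in its shifted form $P_{n}=P_{n-2}+P_{n-3}$ (equivalent to $P_{n+3}=P_{n+1}+P_{n}$). The cleanest route, and the one I would carry out first, is a pure forward reduction rather than an induction on the target identity: I would expand \emph{both} sides of each claimed relation by repeatedly substituting $P_{m}=P_{m-2}+P_{m-3}$ until each collapses to the same minimal linear combination of a few neighboring terms. For the first identity this is immediate. Expanding the right-hand side, $2P_{n-2}-P_{n-4}+P_{n-6}=2(P_{n-4}+P_{n-5})-P_{n-4}+P_{n-6}=P_{n-4}+2P_{n-5}+P_{n-6}$, while the left-hand side gives $P_{n}=P_{n-2}+P_{n-3}=(P_{n-4}+P_{n-5})+(P_{n-5}+P_{n-6})=P_{n-4}+2P_{n-5}+P_{n-6}$; the two agree, which settles $P_{n}=2P_{n-2}-P_{n-4}+P_{n-6}$ with no base cases required, since it is an identity in the free recurrence. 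The step-$3$ and step-$4$ relations follow by the same mechanism, each reduced to a common combination of terms near the deepest back-reference ($P_{n-9}$ and $P_{n-12}$ respectively).

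To match the inductive method named in the statement, I would alternatively phrase this as a strong induction on $n$: after verifying the identity against the explicit values $1,1,1,2,2,3,4,5,\dots$ for enough consecutive seeds (six for the first identity, nine for the second, twelve for the third), the inductive step writes $P_{n}=P_{n-2}+P_{n-3}$, applies the induction hypothesis to $P_{n-2}$ and $P_{n-3}$, and then re-applies the base recurrence to each grouped pair, for instance $P_{n-4}+P_{n-5}=P_{n-2}$ and $P_{n-6}+P_{n-7}=P_{n-4}$, so that the two expansions telescope back to the claimed right-hand side. Both presentations are elementary and essentially bookkeeping; I would favor the forward reduction for brevity.

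The structural reason behind all three identities, which I would cite only as motivation, is that the characteristic polynomial $x^{3}-x-1$ has roots $\rho,\beta,\gamma$ satisfying $\rho+\beta+\gamma=0$, $\rho\beta+\beta\gamma+\gamma\rho=-1$, and $\rho\beta\gamma=1$. The $a$-step subsequence then obeys the recurrence with characteristic polynomial $\prod(t-\rho^{a})$, whose constant term $(\rho\beta\gamma)^{a}=1$ accounts for the trailing coefficient $+P_{n-3a}$ in every case, and whose remaining coefficients are the power sums and symmetric functions of $\rho^{a},\beta^{a},\gamma^{a}$, producing $(2,-1)$, $(3,-2)$, $(2,3)$ for $a=2,3,4$. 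This is exactly the $\rho_{a},\sigma_{a}$ pattern promised later in the section. The only real obstacle is clerical: in the longer step-$3$ and step-$4$ reductions the alternating signs and the many substitutions must be tracked carefully, since a single misapplied instance of the base recurrence silently destroys the telescoping and yields a false coefficient.
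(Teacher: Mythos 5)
Your proposal is correct, and your preferred route is genuinely different from the paper's. The paper verifies a numerical seed (e.g.\ $P_{6}=2P_{4}-P_{2}+P_{0}$) and then runs exactly the strong induction you describe in your second paragraph: write $P_{n}=P_{n-2}+P_{n-3}$, apply the hypothesis to each term, regroup, and collapse each grouped pair back via the recurrence. Your forward reduction instead expands both sides of the claimed identity down to a common linear combination, which amounts to checking that $x^{3a}-\rho_{a}x^{2a}-\sigma_{a}x^{a}-1$ is divisible by $x^{3}-x-1$; this needs no base cases and proves the strictly stronger statement that all three identities hold for \emph{every} Padovan-type sequence regardless of initial values. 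It is also arguably more careful than the paper's argument, which as written checks only the single seed $P_{6}$ even though the inductive step at $n=7,8$ would invoke the hypothesis at $t=5,4$ and hence reach negative subscripts; your count of six (resp.\ nine, twelve) consecutive seeds in the inductive variant closes that gap. The one caveat is that you carry the reduction out explicitly only for the step-$2$ identity and assert that the step-$3$ and step-$4$ cases collapse analogously; since the paper is equally terse about its second and third inductions, and since the divisibility guaranteeing the collapse is immediate from $x^{3}=x+1$ at the roots, this is a matter of bookkeeping rather than a substantive gap.
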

\begin{proof}
Observe that $P_{6}=4=2\cdot 2-1+1=2P_{4}-P_{2}+P_{0}$. Then, if we assume $%
P_{t}=2P_{t-2}-P_{t-4}+P_{t-6}$ for all $t<n$ then 
\begin{eqnarray*}
\bigskip P_{n} &=&P_{n-2}+P_{n-3} \\
&=&\left( 2P_{n-4}-P_{n-6}+P_{n-8}\right) +\left(
2P_{n-5}-P_{n-7}+P_{n-9}\right) \\
&=&2\left( P_{n-4}+P_{n-5}\right) -\left( P_{n-6}+P_{n-7}\right) +\left(
P_{n-8}+P_{n-9}\right) \\
&=&2P_{n-2}-P_{n-4}+P_{n-6}.
\end{eqnarray*}%
Similar to this, we notice $P_{9}=9=3\cdot 4-2\cdot
2+P_{0}=3P_{6}-2P_{3}+P_{0}.$ If we assume $P_{t}=3P_{t-3}-2P_{t-6}+P_{t-9}$
for all $t<n$, then the induction hypothesis proves $%
P_{n}=3P_{n-3}-2P_{n-6}+P_{n-9}.$ 

Analogously, since $P_{12}=21=2\cdot
7+3\cdot 2+1=2P_{8}+3P_{4}+P_{0},$ the identity $%
P_{n}=2P_{n-4}+3P_{n-8}+P_{n-12}$ can be proved immediately by induction.
\end{proof}

\begin{remark}
Note that the identity $P_{4m}=2P_{4(m-1)}+3P_{4(m-2)}+P_{4(m-3)}$ is a
special case of $P_{n}=2P_{n-4}+3P_{n-8}+P_{n-12}$ in above lemma when $n$ is divisible by 4. In general, the identity $P_{am}=\rho _{a}P_{a(m-1)}+\sigma_{a}P_{a(m-2)}+P_{a(m-3)}$ is a special case of $P_{n}=2P_{n-4}+3P_{n-8}+P_{n-12}$ when $n$ is divisible by $a$. We extend Lemma \ref{lem1} to any
integer $0\leq a\leq 8.$
\end{remark}
\begin{theorem}
\label{teo1} Let $0\leq a\leq 8$. Then, the third-order recurrence $P_{n}=\rho _{a}P_{n-a}+\sigma _{a}P_{n-2a}+P_{n-3a}$ of $P_{n}$\ holds with
the following $(\rho _{a},\sigma _{a}).$%
\begin{equation*}
\begin{tabular}{|l|l|l|}
\hline
$a$ & $(\rho _{a},\sigma _{a})$ & $%
\begin{array}{cccc}
P_{n}= & \rho _{a}P_{n-a} & +\sigma _{a}P_{n-2a} & +P_{n-3a}%
\end{array}%
$ \\ \hline
$1$ & $(0,1)$ & $%
\begin{array}{cccc}
P_{n}= &  & P_{n-2} & +P_{n-3}%
\end{array}%
$ \\ 
$2$ & $(2,-1)$ & $%
\begin{array}{cccc}
P_{n}= & 2P_{n-2} & -P_{n-4} & +P_{n-6}%
\end{array}%
$ \\ 
$3$ & $(3,-2)$ & $%
\begin{array}{cccc}
P_{n}= & 3P_{n-3} & -2P_{n-6} & +P_{n-9}%
\end{array}%
$ \\ 
$4$ & $(2,3)$ & $%
\begin{array}{cccc}
P_{n}= & 2P_{n-4} & +3P_{n-8} & +P_{n-12}%
\end{array}%
$ \\ 
$5$ & $(5,-4)$ & $%
\begin{array}{cccc}
P_{n}= & 5P_{n-5} & -4P_{n-10} & +P_{n-15}%
\end{array}%
$ \\ 
$6$ & $(5,2)$ & $%
\begin{array}{cccc}
P_{n}= & 5P_{n-6} & +2P_{n-12} & +P_{n-18}%
\end{array}%
$ \\ 
$7$ & $(7,1)$ & $%
\begin{array}{cccc}
P_{n}= & 7P_{n-7} & +P_{n-14} & +P_{n-21}%
\end{array}%
$ \\ 
$8$ & $(10,-5)$ & $%
\begin{array}{cccc}
P_{n}= & 10P_{n-8} & -5P_{n-16} & +P_{n-24}%
\end{array}%
$ \\ \hline
\end{tabular}%
\end{equation*}
\end{theorem}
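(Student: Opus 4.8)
The plan is to exploit the fact that the Padovan recurrence $P_{n+3}=P_{n+1}+P_{n}$ has characteristic polynomial $x^{3}-x-1$, whose three roots $\rho,\beta,\gamma$ (one real, two complex) satisfy, by Vieta's formulas,
$$\rho+\beta+\gamma=0,\qquad \rho\beta+\beta\gamma+\gamma\rho=-1,\qquad \rho\beta\gamma=1.$$
Writing the closed form $P_{n}=A\rho^{n}+B\beta^{n}+C\gamma^{n}$, any identity of the stated type is equivalent to the single polynomial relation $t^{3}=\rho_{a}t^{2}+\sigma_{a}t+1$ being satisfied simultaneously by $t=\rho^{a}$, $t=\beta^{a}$ and $t=\gamma^{a}$. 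Indeed, the proposed step-$a$ recurrence $P_{n}=\rho_{a}P_{n-a}+\sigma_{a}P_{n-2a}+P_{n-3a}$ holds for all $n$ precisely when each geometric solution $r^{n}$ with $r\in\{\rho,\beta,\gamma\}$ obeys $r^{3a}=\rho_{a}r^{2a}+\sigma_{a}r^{a}+1$, since $P_{n}$ is a linear combination of these and the recurrence is independent of the constants $A,B,C$.

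First I would form the monic cubic whose roots are exactly $\rho^{a},\beta^{a},\gamma^{a}$, namely
$$t^{3}-p_{a}\,t^{2}+q_{a}\,t-(\rho\beta\gamma)^{a}=0,$$
where $p_{a}=\rho^{a}+\beta^{a}+\gamma^{a}$ and $q_{a}=(\rho\beta)^{a}+(\beta\gamma)^{a}+(\gamma\rho)^{a}$. Because $\rho\beta\gamma=1$, the constant term equals $1$ for every $a$, which at once explains the uniform $+P_{n-3a}$ appearing in every row of the table. Comparing this cubic with $t^{3}-\rho_{a}t^{2}-\sigma_{a}t-1=0$ forces $\rho_{a}=p_{a}$ and $\sigma_{a}=-q_{a}$, so the entire theorem collapses to evaluating these two symmetric power sums for $0\leq a\leq 8$.

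Next I would compute $p_{a}$ and $q_{a}$ by Newton's identities. For $p_{a}$ the elementary symmetric functions $(e_{1},e_{2},e_{3})=(0,-1,1)$ yield the linear recurrence $p_{a}=p_{a-2}+p_{a-3}$ with $p_{0}=3$, $p_{1}=0$, $p_{2}=2$; iterating gives $p_{3},\dots,p_{8}=3,2,5,5,7,10$, which matches the $\rho_{a}$ column exactly. For $q_{a}$, observe that $(\rho\beta,\beta\gamma,\gamma\rho)=(1/\gamma,1/\rho,1/\beta)$ are the reciprocals of the original roots, hence the roots of the reversed polynomial $x^{3}+x^{2}-1$; its symmetric functions $(e_{1}',e_{2}',e_{3}')=(-1,0,1)$ give $q_{a}=-q_{a-1}+q_{a-3}$ with $q_{0}=3$, $q_{1}=-1$, $q_{2}=1$, producing $q_{3},\dots,q_{8}=2,-3,4,-2,-1,5$, so that $\sigma_{a}=-q_{a}$ reproduces the second column.

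The one point requiring care—really the only conceptual step—is the equivalence asserted in the first paragraph, that the step-$a$ recurrence for the integer sequence $P_{n}$ is the same as the vanishing of $t^{3}-\rho_{a}t^{2}-\sigma_{a}t-1$ at all three roots $\rho^{a},\beta^{a},\gamma^{a}$. I would justify it by linearity: each basis solution satisfies the recurrence iff it satisfies the cubic evaluated at $r^{a}$, and a linear combination of solutions of a fixed linear recurrence is again a solution. For those preferring a self-contained argument in the inductive spirit of Lemma \ref{lem1}, one may instead fix each value of $a$, verify the identity on the initial segment of Padovan numbers, and then close the induction using $P_{n}=P_{n-2}+P_{n-3}$ exactly as in the lemma; the symmetric-function computation above simply predicts the correct constants $(\rho_{a},\sigma_{a})$ in advance, and the same framework trivially covers the degenerate case $a=0$, where $(p_{0},q_{0})=(3,3)$ makes the stated relation an identity.
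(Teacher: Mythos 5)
Your proof is correct, but it takes a genuinely different route from the paper's. The paper argues case by case: $a=1$ is the defining recurrence, $a=2,3,4$ are quoted from Lemma \ref{lem1}, and for $a=5,\dots,8$ the candidate identities are first read off numerically from the $a$-column Padovan tables and then confirmed by induction on $n$ using $P_{n}=P_{n-2}+P_{n-3}$ (for $a=7,8$ this verification is only sketched). You instead derive the coefficients uniformly: writing $P_{n}=A\rho^{n}+B\beta^{n}+C\gamma^{n}$ from the characteristic polynomial $x^{3}-x-1$ (whose roots are distinct, e.g.\ because its discriminant is $-23$), the step-$a$ recurrence follows as soon as $\rho^{a},\beta^{a},\gamma^{a}$ are the roots of $t^{3}-\rho_{a}t^{2}-\sigma_{a}t-1$; since $\rho\beta\gamma=1$, the monic cubic with those roots is $t^{3}-p_{a}t^{2}+q_{a}t-1$, and the power sums $p_{a},q_{a}$ are computed correctly via Newton's identities, matching every row of the table. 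This buys several things the paper's method does not: a structural explanation of why the coefficient of $P_{n-3a}$ is always $1$, coverage of the degenerate case $a=0$ (included in the statement but absent from the table), and, essentially for free, the general theorem of Section 3 --- your argument works verbatim for every integer $a$, and the recurrences $p_{a}=p_{a-2}+p_{a-3}$ and $q_{a}=-q_{a-1}+q_{a-3}$, together with the observation $q_{a}=p_{-a}$, reproduce the paper's later claims that $\{\rho_{a}\}$ is of Padovan type and that $\sigma_{a}=-\rho_{-a}$, which the paper establishes by separate inductions. What the paper's approach buys in exchange is elementarity: no complex roots, no Binet-type closed form, no symmetric-function machinery, only the defining recurrence and induction. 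Two minor remarks: only the ``if'' direction of your claimed equivalence is actually needed (the ``only if'' direction would require $A,B,C\neq 0$, which you never verify, but also never use); and your closing fallback --- verify initial cases for each fixed $a$, then close the induction with $P_{n}=P_{n-2}+P_{n-3}$ --- is precisely the paper's own proof.
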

\begin{proof}
Clearly $P_{n}=P_{n-2}+P_{n-3}$ shows $(\rho _{1},\sigma _{1})=(0,1).$ And
above Lemma shows $(\rho _{a},\sigma _{a})=(2,-1),$ $(3,-2)$ and $(2,3)$ for 
$a=2,3,4$, respectively. For $5\leq a\leq 8$, we shall consider $a$ columns
Padovan tables. Let us begin with $a=5$.%
\begin{equation*}
\left|
\begin{array}{ccccc}
1 & 1 & 2 & 2 & 3 \\ 
4 & 5 & 7 & 9 & 12 \\ 
16 & 21 & 28 & 37 & 49 \\ 
65 & 86 & 114 & 151 & ...%
\end{array}%
\right|.
\end{equation*}%
Then it can be observed that, for instance%
\begin{equation*}
\left\{ 
\begin{array}{c}
P_{16}=65=5\cdot 16-4\cdot 4+1=5P_{11}-4P_{6}+P_{1} \\ 
P_{17}=86=5\cdot 21-4\cdot 5+1=5P_{12}-4P_{7}+P_{2} \\ 
P_{18}=114=5\cdot 28-4\cdot 7+2=5P_{13}-4P_{8}+P_{3}%
\end{array}%
\right. .
\end{equation*}%
Thus, by assuming $P_{t}=5P_{t-5}-4P_{t-10}+P_{t-15}$ for all $t<n$, the
induction hypothesis gives rise to 
\begin{eqnarray*}
P_{n} &=&P_{n-2}+P_{n-3} \\
&=&\left( 5P_{n-7}-4P_{n-12}+P_{n-17}\right) +\left(
5P_{n-8}-4P_{n-13}+P_{n-18}\right) \\
&=&5\left( P_{n-7}+P_{n-8}\right) -4\left( P_{n-12}+P_{n-13}\right) +\left(
P_{n-17}+P_{n-18}\right) \\
&=&5P_{n-5}-4P_{n-10}+P_{n-15},
\end{eqnarray*}%
so $(\rho _{5},\sigma _{5})=(5,-4).$ Moreover from the $6$ columns Padovan
table%
\begin{equation*}
\left|
\begin{array}{cccccc}
1 & 1 & 2 & 2 & 3 & 4 \\ 
5 & 7 & 9 & 12 & 16 & 21 \\ 
28 & 37 & 49 & 65 & 86 & 114 \\ 
151 & 200 & 265 & 351 & 465 & ...%
\end{array}%
\right|
\end{equation*}%
we can observe that, for instance 
\begin{equation*}
\left\{ 
\begin{array}{c}
P_{19}=151=5\cdot 28+2\cdot 5+1=5P_{13}+2P_{7}+P_{1} \\ 
P_{20}=200=5\cdot 37+2\cdot 7+1=5P_{14}+2P_{8}+P_{2} \\ 
P_{21}=265=5\cdot 49+2\cdot 9+2=5P_{15}+2P_{9}+P_{3}%
\end{array}%
\right. .
\end{equation*}%
By assuming $P_{t}=5P_{t-6}+2P_{t-12}+P_{t-18}$ for all $t<n$, we have 
\begin{eqnarray*}
P_{n} &=&P_{n-2}+P_{n-3} \\
&=&\left( 5P_{n-8}+P_{n-14}+P_{n-20}\right) +\left(
5P_{n-9}+2P_{n-15}+P_{n-21}\right) \\
&=&5\left( P_{n-8}+P_{n-9}\right) +2\left( P_{n-14}+P_{n-15}\right) +\left(
P_{n-20}+P_{n-21}\right) \\
&=&5P_{n-6}-4P_{n-12}+P_{n-18},
\end{eqnarray*}%
so $(\rho _{6},\sigma _{6})=(5,2).$ Therefore the observations and
mathematical induction prove that the coefficients $(\rho _{a},\sigma _{a})$
for $a=7,8$ satisfying $$P_{n}=\rho _{a}P_{n-a}+\sigma _{a}P_{n-2a}+P_{n-3a}$$
are equal to $(7,1),$ $(10,-5),$ respectively.
\end{proof}

We note that the subscript $n$ of $P_{n}$ could be negative, for example, in 
$6\ $columns Padovan table, $P_{15}=49=5P_{9}+2P_{3}+P_{-3}$. In general,

\begin{definition}
A sequence $p_{n}$ is called a Padovan type if it satisfies $%
p_{n}=p_{n-2}+p_{n-3}$ with any initials $p_{1},$ $p_{2}$ and $p_{3}$.
\end{definition}

\begin{theorem}
For $0\leq a\leq 8$, let $(\rho _{a},\sigma _{a})$ be the coefficient of the
third order recurrence $P_{n}=\rho _{a}P_{n-a}+\sigma _{a}P_{n-2a}+P_{n-3a}.$
Then,
\begin{enumerate}
\item For $1\leq b\leq 5,$ $\{\rho _{a}\}$ is a Padovan type sequence $\rho
_{b+3}=\rho _{b+1}+\rho _{b}$ with initials $\rho _{1}=0,$ $\rho _{2}=2$ and 
$\rho _{3}=3,$ while $\{\sigma _{a}\}$\ satisfies $\sigma _{b+3}=\sigma
_{b}-\sigma _{b+2}$ with $\sigma _{1}=1,$ $\sigma _{2}=-1$ and $\sigma
_{3}=-2.$
\item Moreover, $\rho _{a}=3P_{a-2}-P_{a-4}$ and $\sigma _{a}=-\rho _{-a}$
for $0\leq a\leq 8$, where $P_{n}$ is the $n$th Padovan number.
\end{enumerate}
\end{theorem}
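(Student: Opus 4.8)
The plan is to treat both parts as finite verifications anchored to the coefficient table of Theorem \ref{teo1}, while exposing the structural reason behind the recurrences. First I would record the eight pairs $(\rho_a,\sigma_a)$ for $a=1,\dots,8$ directly from that table. Part (1) is then immediate: for each $b=1,\dots,5$ one simply substitutes the tabulated values into $\rho_{b+3}=\rho_{b+1}+\rho_b$ and into $\sigma_{b+3}=\sigma_b-\sigma_{b+2}$ and checks equality. For instance $\rho_6=5=2+3=\rho_4+\rho_3$ and $\sigma_6=2=-2-(-4)=\sigma_3-\sigma_5$. Conceptually these two recurrences are not accidental: if $x_1,x_2,x_3$ denote the roots of $x^3-x-1=0$, then the $a$-step recurrence has characteristic roots $x_1^a,x_2^a,x_3^a$, so that $\rho_a=x_1^a+x_2^a+x_3^a$ is the $a$-th power sum and $-\sigma_a=(x_1x_2)^a+(x_1x_3)^a+(x_2x_3)^a$. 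Newton's identities, together with $x_1+x_2+x_3=0$, $x_1x_2+x_1x_3+x_2x_3=-1$ and $x_1x_2x_3=1$, then force exactly the stated recurrences (and, incidentally, the constant coefficient $1$ of $P_{n-3a}$, since $(x_1x_2x_3)^a=1$). I would mention this as the underlying reason but carry out the proof by the finite check, which is all that is needed for $0\le a\le 8$.

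For the closed form $\rho_a=3P_{a-2}-P_{a-4}$ in part (2), the efficient route is to use the recurrence already established in part (1). By (1) the sequence $\{\rho_a\}$ obeys the Padovan recurrence $\rho_a=\rho_{a-2}+\rho_{a-3}$. On the other hand, the right-hand side $3P_{a-2}-P_{a-4}$ is a fixed linear combination of Padovan numbers, each of which satisfies $P_a=P_{a-2}+P_{a-3}$; hence the map $a\mapsto 3P_{a-2}-P_{a-4}$ satisfies the same third-order recurrence. Two solutions of one third-order linear recurrence that agree on three consecutive indices coincide, so it suffices to verify the identity at, say, $a=2,3,4$, using $P_0=P_1=P_2=1$, $P_{-1}=0$ and $P_{-2}=1$. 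This pins down the formula for all $a$ in range.

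For the reflection formula $\sigma_a=-\rho_{-a}$ I would first make sense of $\rho$ at non-positive indices by running its recurrence backwards: solving $\rho_a=\rho_{a-2}+\rho_{a-3}$ for the lowest term gives $\rho_{a-3}=\rho_a-\rho_{a-2}$, which determines $\rho_0,\rho_{-1},\dots,\rho_{-8}$ from the known values $\rho_1,\rho_2,\rho_3$. Carrying this out yields $\rho_0=3$, $\rho_{-1}=-1$, $\rho_{-2}=1$, $\rho_{-3}=2,\dots,\rho_{-8}=5$, and comparing with the tabulated $\sigma_a$ confirms $\sigma_a=-\rho_{-a}$ for each $0\le a\le 8$. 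Equivalently, one may extend the closed form of part (2) to negative indices using Padovan numbers at negative subscripts (themselves defined by the same backward recurrence) and check that the two expressions agree. The only real subtlety---and the step I would be most careful about---is the bookkeeping of this backward extension: one must fix a single consistent convention for $\rho$ and for $P$ at negative subscripts so that the symmetry $\rho_a=x_1^a+x_2^a+x_3^a$, valid for all integers $a$ because the roots are nonzero, is faithfully reflected. Once that convention is fixed, both identities in part (2) reduce to routine arithmetic.
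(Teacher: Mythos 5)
Your proposal is correct and follows essentially the same route as the paper: a finite check of the two recurrences against the tabulated $(\rho_a,\sigma_a)$, verification of $\rho_a=3P_{a-2}-P_{a-4}$ at three consecutive indices propagated by the shared Padovan-type recurrence, and a backward extension of $\{\rho_a\}$ to negative indices to read off $\sigma_a=-\rho_{-a}$. The aside identifying $\rho_a$ and $-\sigma_a$ as power sums of the roots of $x^3-x-1$ via Newton's identities is a nice conceptual supplement not present in the paper, but it is not load-bearing in your argument.
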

\begin{proof}
By above theorem, $$\{\rho _{a}\}_{a=1}^{8}=\{0,2,3,2,5,5,7,10\}$$ and $$%
\{\sigma _{a}\}_{a=1}^{8}=\{1,-1,-2,3,-4,2,1,-5\}.$$ So it is easy to see
that $\rho _{b+3}=\rho _{b+1}+\rho _{b}$ and $\sigma _{b+3}=\sigma
_{b}-\sigma _{b+2}$ for $1\leq b\leq 5$. Moreover, by means of Padovan
numbers $P_{a}$, we notice%
\begin{equation*}
\rho _{1}=0=3P_{-1}-P_{-3},\text{ }\rho _{2}=2=3P_{0}-P_{-2},\text{ }\rho
_{3}=3=3P_{1}-P_{-1},
\end{equation*}%
and $\rho _{4}=\rho _{2}+\rho _{1}=3P_{2}-P_{0},$ etc. So $\rho
_{a}=3P_{a-2}-P_{a-4}$ for $1\leq a\leq 8.$ Now, by considering $P_{n}$ with
negative $n$, the Padovan type sequence $\{\rho _{a}\}$ can be extended to
any $a\in \mathbb{Z}$, as follows.
\begin{equation*}
\begin{tabular}{l|llllllllllllll}
\hline
$a$ & $...$ & $-8$ & $-7$ & $-6$ & $-5$ & $-4$ & $-3$ & $-2$ & $-1$ & $0$ & $%
1$ & $2$ & $3$ & $...$ \\ \hline
$\rho _{a}$ & $...$ & $5$ & $-1$ & $-2$ & $4$ & $-3$ & $2$ & $1$ & $-1$ & $3$
& $0$ & $2$ & $3$ & $...$ \\ \hline
\end{tabular}%
\end{equation*}%
Then by comparing $\{\rho _{a}\}_{a=-1}^{-8}=\{-1,1,2,-3,4,-2,-1,5\}$ with $%
\{\sigma _{a}\}_{a=1}^{8}$, we find that $\sigma _{a}=-\rho _{-a}$ for $%
0\leq a\leq 8$.
\end{proof}
\section{The third-order linear recurrence of $P_{n}$}
We shall generalize the findings in above section for $0\leq a\leq 8$ to any
integer $a$.
\begin{theorem}
Let $\rho _{a}=3P_{a-2}-P_{a-4}$ and $\sigma _{a}=-\rho _{-a}$ for any $a\in \mathbb{Z}$. Then, any $n$th Padovan number satisfies $P_{n}=\rho _{a}P_{n-a}+\sigma
_{a}P_{n-2a}+P_{n-3a}$ for every $a<n$.
\end{theorem}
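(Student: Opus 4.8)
The plan is to diagonalize the Padovan recurrence and reduce the claimed identity to a statement about symmetric functions of the characteristic roots. Write $x^{3}-x-1=(x-\rho)(x-\alpha)(x-\beta)$, where $\rho$ is the real root appearing in the introduction and $\alpha,\beta$ are its complex-conjugate partners; Vieta's formulas give $\rho+\alpha+\beta=0$, $\rho\alpha+\rho\beta+\alpha\beta=-1$ and $\rho\alpha\beta=1$. Every Padovan-type sequence, and in particular $P_{n}$, has a Binet representation $P_{n}=A\rho^{n}+B\alpha^{n}+C\beta^{n}$, so the target identity $P_{n}=\rho_{a}P_{n-a}+\sigma_{a}P_{n-2a}+P_{n-3a}$ will follow for all $n$ once I verify the scalar identity $x^{3a}=\rho_{a}x^{2a}+\sigma_{a}x^{a}+1$ for each root $x\in\{\rho,\alpha,\beta\}$ (multiply by $x^{n-3a}$ and take the $A,B,C$-combination). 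Equivalently, I must show that $\rho^{a},\alpha^{a},\beta^{a}$ are exactly the three roots of $Y^{3}-\rho_{a}Y^{2}-\sigma_{a}Y-1$.

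First I would compute the elementary symmetric functions of $\rho^{a},\alpha^{a},\beta^{a}$. The product is $(\rho\alpha\beta)^{a}=1$ for every $a\in\mathbb{Z}$, which matches the constant term. Using $\rho\alpha\beta=1$ one has $\alpha^{a}\beta^{a}=\rho^{-a}$ and cyclically, so the second symmetric function equals $\rho^{-a}+\alpha^{-a}+\beta^{-a}$, i.e. the power sum at $-a$. Thus the whole statement reduces to the single key identity
\begin{equation*}
p_{a}:=\rho^{a}+\alpha^{a}+\beta^{a}=3P_{a-2}-P_{a-4}=\rho_{a}\quad(a\in\mathbb{Z}),
\end{equation*}
since then the first symmetric function is $p_{a}=\rho_{a}$ and the second is $p_{-a}=\rho_{-a}=-\sigma_{a}$, which are precisely the coefficients of $Y^{3}-\rho_{a}Y^{2}-\sigma_{a}Y-1$.

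To prove the key identity I would argue that both sides satisfy the Padovan recurrence in the index $a$ and then match initial data. The right-hand side does because $P$ does. For the left-hand side, each root obeys $x^{a}=x^{a-2}+x^{a-3}$ (multiply $x^{3}=x+1$ by $x^{a-3}$), so summing over the three roots gives $p_{a}=p_{a-2}+p_{a-3}$; this holds for all integers $a$ because each root is nonzero (their product is $1$), so negative powers are legitimate. A three-term recurrence is pinned down by three consecutive values, so it suffices to check $a=0,1,2$: here $p_{0}=3$, $p_{1}=\rho+\alpha+\beta=0$, and $p_{2}=(\rho+\alpha+\beta)^{2}-2(\rho\alpha+\rho\beta+\alpha\beta)=2$, while the extended Padovan values (computed from $P_{n}=P_{n+3}-P_{n+1}$ for $n<0$) give $3P_{-2}-P_{-4}=3$, $3P_{-1}-P_{-3}=0$ and $3P_{0}-P_{-2}=2$. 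The two sequences therefore coincide for all $a\in\mathbb{Z}$, finishing the argument.

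The main obstacle is organizational rather than computational: one must ensure that the power-sum recurrence and the Binet expansion remain valid for negative indices $a$, which is exactly where the hypothesis $\sigma_{a}=-\rho_{-a}$ is forced, since the second symmetric function is genuinely the power sum at $-a$ and not at $a$. Once negative powers of the units $\rho,\alpha,\beta$ are handled and the three base values are checked, no case distinction on the sign or size of $a$ is needed, and the uniform statement for all integers $a$ drops out at once. An alternative more in the spirit of the earlier proofs is to fix $a$ and induct on $n$: the difference $D_{a}(n)=P_{n}-\rho_{a}P_{n-a}-\sigma_{a}P_{n-2a}-P_{n-3a}$ trivially satisfies $D_{a}(n)=D_{a}(n-2)+D_{a}(n-3)$, reducing everything to three base values of $n$; but verifying those base values uniformly in $a$ still requires the same symmetric-function computation, so I prefer to run the argument once at the level of the characteristic roots.
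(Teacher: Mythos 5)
Your proof is correct, but it follows a genuinely different route from the paper. The paper bootstraps from its earlier table-based theorem (the cases $0\leq a\leq 8$, verified by inspection of $a$-column Padovan tables and induction on $n$), then extends to all $a$ by showing $\{\rho_a\}$ is Padovan-type and $\{\sigma_a\}$ satisfies $\sigma_{a+3}=\sigma_a-\sigma_{a+2}$, and finally runs an induction on $a$ whose key step is only sketched (``long calculations''). You instead diagonalize: writing $x^3-x-1=(x-\rho)(x-\alpha)(x-\beta)$ and using the Binet form, you reduce the identity to the statement that $\rho^a,\alpha^a,\beta^a$ are the roots of $Y^3-\rho_aY^2-\sigma_aY-1$, which in turn reduces (via $\rho\alpha\beta=1$, so the second elementary symmetric function of the $a$-th powers is the power sum at $-a$) to the single identity $p_a:=\rho^a+\alpha^a+\beta^a=3P_{a-2}-P_{a-4}$, proved by matching the recurrence $p_a=p_{a-2}+p_{a-3}$ and three initial values. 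This buys several things the paper's argument does not: it is self-contained (no reliance on the $0\leq a\leq 8$ cases), it needs no case analysis or unstated computation, it proves the stronger statement that the identity holds for \emph{all} integers $n$ and $a$ (not just $a<n$), and it explains conceptually what the coefficients are --- $\rho_a$ is the Perrin-type power-sum sequence of the characteristic roots, and $\sigma_a=-\rho_{-a}$ is forced because the second symmetric function of $\rho^a,\alpha^a,\beta^a$ is exactly $p_{-a}$. What the paper's approach buys in exchange is that it stays entirely within integer arithmetic and elementary induction, never invoking the (irrational and complex) characteristic roots; your argument implicitly uses that the roots are distinct and nonzero (discriminant $-23\neq 0$, product of roots $1$), which is standard but worth stating explicitly when you invoke the Binet representation at negative indices.
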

\begin{proof}
It is due to above theorem if $0\leq a\leq 8$. Since $\rho
_{a}=3P_{a-2}-P_{a-4}$ for all $a\in \mathbb{Z}$, $\{\rho _{a}\}$ is a Padovan type sequence because 
\begin{eqnarray*}
\rho _{a+1}+\rho _{a} &=&\left( 3P_{a-1}-P_{a-3}\right) +\left(
3P_{a-2}-P_{a-4}\right) \\
&=&3\left( P_{a-1}+P_{a-2}\right) -\left( P_{a-3}+P_{a-4}\right) \\
&=&3P_{a+1}-P_{a-1}=\rho _{a+3}.
\end{eqnarray*}%
Similarly, since $\sigma _{a}=-\rho _{-a}$ for all $a$, $\{\sigma _{a}\}$
satisfies 
\begin{eqnarray*}
\sigma _{a}-\sigma _{a+2} &=&-\rho _{-a}+\rho _{-(a+2)} \\
&=&-\left( \rho _{-a-2}+\rho _{-a-3}\right) +\rho _{-(a+2)} \\
&=&-\rho _{-(a+3)}=\sigma _{a+3}.
\end{eqnarray*}%
We now suppose that the three order recurrence $P_{n}=\rho
_{t}P_{n-t}+\sigma _{t}P_{n-2t}+P_{n-3t}$ hold for all $t<a$. Since $%
P_{n-(a-2)}=P_{n-a}+P_{n-(a+1)},$ $%
P_{n-2(a-2)}=2P_{n-2(a-1)}-P_{n-2a}+P_{n-2(a+1)}$ and $%
P_{n-3(a-2)}=3P_{n-3(a-1)}-2P_{n-3a}+P_{n-3(a+1)}.$ Then, by lemma, the
mathematical induction with long calculations proves that 
\begin{eqnarray*}
&&\rho _{a+1}P_{n-(a+1)}+\sigma _{a+1}P_{n-2(a+1)}+P_{n-3(a+1)} \\
&=&\left( \rho _{a-1}+\rho _{a-2}\right) P_{n-(a+1)}+\left( \sigma
_{a-2}-\sigma _{a}\right) P_{n-2(a+1)}+P_{n-3(a+1)} \\
&=&\left( \rho _{a-1}+\rho _{a-2}\right) \left( P_{n-(a-2)}-P_{n-a}\right) \\
&&+\left( \sigma _{a-2}-\sigma _{a}\right) \left(
P_{n-2(a-2)}-2P_{n-2(a-1)}+P_{n-2a}\right) \\
&&+\left( P_{n-3(a-2)}-3P_{n-3(a-1)}+2P_{n-3a}\right) \\
&=&P_{n}.
\end{eqnarray*}
\end{proof}

Above theorem provides a good way to find huge Padovan numbers. For
instance, for $40$th Padovan number $P_{40},$ we may choose any $a$, say $%
a=10,$ then $\rho _{10}=3P_{8}-P_{6}=17$ and $\sigma _{10}=-\rho _{-10}=-6$,
thus 
\begin{eqnarray*}
P_{40} &=&\rho _{10}P_{40-10}+\sigma _{10}P_{40-20}+P_{40-30} \\
&=&17\cdot 3329-6\cdot 200+12 \\
&=&55405.
\end{eqnarray*}%
On the other hand, if we take $a=8$ then $\rho _{8}=3P_{6}-P_{4}=10$ and $%
\sigma _{8}=-\rho _{-8}=-5$, so $P_{40}$ can be obtained by $P_{40}=\rho
_{8}P_{32}+\sigma _{10}P_{24}+P_{16}.$

Besides the expression of $\rho _{a}$ by six step apart Padovan numbers,
more identities for $\rho _{a}$ can be developed in terms of three
successive Padovan numbers.

\begin{theorem}
Let $a$ be any integer. Then $\rho
_{a}=P_{a}-P_{a-1}+2P_{a-2}=2P_{a+1}+P_{a}-3P_{a-1}.$ So, $\rho _{a}=\left[ 
\begin{array}{ccc}
2 & 1 & -3%
\end{array}%
\right] \left[ 
\begin{array}{c}
P_{a+1} \\ 
P_{a} \\ 
P_{a-1}%
\end{array}%
\right] .$
\end{theorem}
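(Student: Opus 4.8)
The plan is to start from the formula $\rho_a = 3P_{a-2} - P_{a-4}$, which is already available from the previous theorem for every integer $a$, and to rewrite it using nothing more than the defining recurrence $P_n = P_{n-2} + P_{n-3}$. The whole statement is a purely algebraic identity among Padovan numbers once this starting point is granted, so no induction or table computation is needed; the task reduces to collapsing the six-step-apart expression into three consecutive terms.

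First I would eliminate the lowest-index term $P_{a-4}$. Applying the recurrence in the form $P_{a-1} = P_{a-3} + P_{a-4}$ gives $P_{a-4} = P_{a-1} - P_{a-3}$, so that
\begin{equation*}
\rho_a = 3P_{a-2} - P_{a-4} = 3P_{a-2} - P_{a-1} + P_{a-3}.
\end{equation*}
Next I would remove $P_{a-3}$ by means of $P_a = P_{a-2} + P_{a-3}$, i.e. $P_{a-3} = P_a - P_{a-2}$, which yields
\begin{equation*}
\rho_a = 3P_{a-2} - P_{a-1} + (P_a - P_{a-2}) = P_a - P_{a-1} + 2P_{a-2}.
\end{equation*}
This establishes the first of the two claimed forms.

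For the second form I would pass from $P_{a-2}$ to $P_{a+1}$ using $P_{a+1} = P_{a-1} + P_{a-2}$, hence $P_{a-2} = P_{a+1} - P_{a-1}$. Substituting into the expression just obtained gives
\begin{equation*}
\rho_a = P_a - P_{a-1} + 2(P_{a+1} - P_{a-1}) = 2P_{a+1} + P_a - 3P_{a-1},
\end{equation*}
which is exactly the second displayed identity. The matrix statement is then only a restatement of this last line, reading off the row vector of coefficients $(2,1,-3)$ against the column vector $(P_{a+1}, P_a, P_{a-1})^{\top}$. Since $\rho_a = 3P_{a-2}-P_{a-4}$ and the recurrence both hold for all $a \in \mathbb{Z}$, the identities are valid for every integer $a$, including the negative-subscript case.

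I do not expect a genuine obstacle here: the only thing to watch is careful bookkeeping of subscripts, making sure that each application of $P_n = P_{n-2}+P_{n-3}$ is the correct instance (the three I use are indexed at $a-1$, $a$, and $a+1$). The one point worth a sentence of justification is that the Padovan recurrence is being used for arbitrary integer indices, which is legitimate because the sequence has already been extended to negative subscripts in the earlier discussion.
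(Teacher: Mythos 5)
Your proof is correct and follows essentially the same route as the paper: both start from $\rho_a = 3P_{a-2}-P_{a-4}$ and apply the recurrence at indices $a-1$, $a$, and $a+1$ to eliminate $P_{a-4}$, $P_{a-3}$, and $P_{a-2}$ in turn, arriving first at $P_a - P_{a-1} + 2P_{a-2}$ and then at $2P_{a+1}+P_a-3P_{a-1}$. The only difference is cosmetic ordering of the substitutions, so nothing further is needed.
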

\begin{proof}
Since 
\begin{eqnarray*}
P_{a-4} &=&P_{a-1}-P_{a-3}=P_{a-1}-\left( P_{a}-P_{a-2}\right) \\
&=&P_{a-1}-P_{a}+P_{a-2},
\end{eqnarray*}%
it follows that 
\begin{eqnarray*}
\rho _{a} &=&3P_{a-2}-P_{a-4} \\
&=&-P_{a-1}+P_{a}+2P_{a-2} \\
&=&-P_{a-1}+P_{a}+2\left( P_{a+1}-P_{a-1}\right) \\
&=&2P_{a+1}+P_{a}-3P_{a-1}.
\end{eqnarray*}%
Hence we have $\rho _{a}=2P_{a+1}+P_{a}-3P_{a-1}=\left[ 
\begin{array}{ccc}
2 & 1 & -3%
\end{array}%
\right] \left[ 
\begin{array}{c}
P_{a+1} \\ 
P_{a} \\ 
P_{a-1}%
\end{array}%
\right] .$

Therefore it follows immediately 
\begin{equation*}
\rho _{-a}=2P_{-a+1}+P_{-a}-3P_{-a-1}=\left[ 
\begin{array}{ccc}
2 & 1 & -3%
\end{array}%
\right] \left[ 
\begin{array}{c}
P_{-a+1} \\ 
P_{-a} \\ 
P_{-a-1}%
\end{array}%
\right] .
\end{equation*}
\end{proof}

For each $n\in 
\mathbb{Z}
$, we define two sequences%
\begin{equation*}
Q_{n}=P_{n}+P_{-n}\text{ and }R_{n}=P_{n}-P_{-n}.
\end{equation*}%
Then it is easy to have the table%
\begin{equation*}
\begin{tabular}{l|llllllllllllll}
\hline
$n$ & $...$ & $1$ & $2$ & $3$ & $4$ & $5$ & $6$ & $7$ & $8$ & $9$ & $10$ & $%
11$ & $12$ & $...$ \\ \hline\hline
$P_{n}$ & $...$ & $1$ & $1$ & $2$ & $2$ & $3$ & $4$ & $\mathbf{5}$ & $7$ & $%
9 $ & $12$ & $16$ & $21$ & $...$ \\ 
$P_{-n}$ & $...$ & $0$ & $1$ & $0$ & $0$ & $1$ & $-1$ & $1$ & $0$ & $-1$ & $%
2 $ & $-2$ & $1$ & $...$ \\ \hline
$Q_{n}$ & $...$ & $1$ & $2$ & $\mathbf{2}$ & $2$ & $\mathbf{4}$ & $\mathbf{3}
$ & $6$ & $7$ & $8$ & $14$ & $14$ & $22$ & $...$ \\ 
$R_{n}$ & $...$ & $1$ & $0$ & $\mathbf{2}$ & $2$ & $\mathbf{2}$ & $\mathbf{5}
$ & $4$ & $7$ & $10$ & $10$ & $18$ & $20$ & $...$ \\ \hline
\end{tabular}%
\end{equation*}

From the table, we notice $P_{7}=5=3+4-2$ or $P_{7}=Q_{6}+Q_{5}-Q_{3}.$
\begin{theorem}
Let $a\in \mathbb{Z}$. Then, the sequences $\{Q_{n}\}$ satisfy the relation $%
P_{n+1}=Q_{n}+Q_{n-1}-Q_{n-3}.$ Furthermore, $P_{n}=\frac{1}{2}\left(
Q_{n}+R_{n}\right) $ and $P_{-n}=\frac{1}{2}\left( Q_{n}-R_{n}\right) .$
\end{theorem}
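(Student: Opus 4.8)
The plan is to treat the two assertions separately, since the pair of expressions for $P_{n}$ and $P_{-n}$ is a purely formal consequence of the definitions, whereas the recurrence for $P_{n+1}$ needs the Padovan relation applied to both positive and negative indices. For the second part I would simply add and subtract the defining equations $Q_{n}=P_{n}+P_{-n}$ and $R_{n}=P_{n}-P_{-n}$: their sum gives $Q_{n}+R_{n}=2P_{n}$ and their difference gives $Q_{n}-R_{n}=2P_{-n}$, so that $P_{n}=\tfrac12(Q_{n}+R_{n})$ and $P_{-n}=\tfrac12(Q_{n}-R_{n})$ follow immediately.

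For the first identity I would expand $Q_{n}+Q_{n-1}-Q_{n-3}$ by the definition of $Q$ and split it into the positive-index and negative-index contributions:
\begin{equation*}
Q_{n}+Q_{n-1}-Q_{n-3}=\left(P_{n}+P_{n-1}-P_{n-3}\right)+\left(P_{-n}+P_{-(n-1)}-P_{-(n-3)}\right).
\end{equation*}
The crucial point is that the Padovan relation $P_{m}=P_{m-2}+P_{m-3}$ holds for every integer $m$, so it applies to both groups. In the first group $P_{n}-P_{n-3}=P_{n-2}$, whence $P_{n}+P_{n-1}-P_{n-3}=P_{n-1}+P_{n-2}=P_{n+1}$. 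In the second group, writing $k=-n$ and using $P_{k+3}=P_{k+1}+P_{k}$ shows $P_{k}+P_{k+1}-P_{k+3}=0$, so the negative-index contribution vanishes identically. Adding the two pieces yields $Q_{n}+Q_{n-1}-Q_{n-3}=P_{n+1}$.

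The computation is entirely routine, and the only step demanding care is the tacit use of the recurrence for negative subscripts. I would therefore note explicitly that $\{P_{n}\}$ extends to all of $\mathbb{Z}$ by the same relation $p_{n}=p_{n-2}+p_{n-3}$, consistent with the values $P_{-n}$ recorded in the preceding table; this is precisely what forces the negative part to cancel. Beyond correct index bookkeeping there is no substantive obstacle.
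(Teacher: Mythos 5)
Your proof is correct and takes essentially the same route as the paper's: both expand the $Q$'s by their definition and apply the Padovan recurrence $P_{m}=P_{m-2}+P_{m-3}$, valid for all $m\in \mathbb{Z}$, to the positive- and negative-index terms (the paper rewrites $Q_{n}$ step by step and regroups into $Q_{n-1}$ and $Q_{n-3}$, whereas you split $Q_{n}+Q_{n-1}-Q_{n-3}$ into a positive-index block equal to $P_{n+1}$ and a negative-index block that cancels, a purely organizational difference). Your explicit remark that the recurrence must be used for negative subscripts is a point the paper uses tacitly, and your treatment of the trivial ``furthermore'' part is fine.
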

\begin{proof}
It is easy to see that 
\begin{eqnarray*}
Q_{n} &=&P_{n}+P_{-n} \\
&=&\left( P_{n-2}+P_{n-3}\right) +\left( -P_{-(n-1)}+P_{-(n-3)}\right) \\
&=&Q_{n-3}+P_{n-2}-P_{-(n-1)} \\
&=&Q_{n-3}+\left( P_{n+1}-P_{n-1}\right) -P_{-(n-1)} \\
&=&Q_{n-3}-Q_{n-1}+P_{n+1}.
\end{eqnarray*}%
Hence $P_{n+1}=Q_{n}+Q_{n-1}-Q_{n-3}$.
\end{proof}

\begin{theorem}
Let $n=am+b$ with $1\leq b\leq a<n$. Let $(\rho _{a},\sigma _{a})$ be the
coefficient of the third-order recurrence $P_{n}=\rho _{a}P_{n-a}+\sigma
_{a}P_{n-2a}+P_{n-3a}.$ Then, $P_{n}$ is a linear combination of any three
consecutive entries of $b$th column in the $a$ columns Padovan table.
Furthermore, $P_{n}$ is expressed by the first three terms $P_{2a+b},$ $%
P_{a+b}$ and $P_{b}$ of $b$th column.
\end{theorem}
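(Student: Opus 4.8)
The plan is to establish the theorem in two stages, matching the two assertions in the statement. First I would show that $P_n$ is a linear combination of any three consecutive entries of the $b$th column, and then specialize the coefficients so that the relevant three entries are exactly $P_{2a+b}$, $P_{a+b}$, and $P_b$.

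For the first assertion, the key observation is that the $b$th column of the $a$ columns Padovan table consists precisely of the terms $P_b, P_{a+b}, P_{2a+b}, P_{3a+b}, \dots$, that is, the terms $P_{am+b}$ as $m$ ranges over the nonnegative integers. Three consecutive entries in this column are therefore $P_{a(m-3)+b}$, $P_{a(m-2)+b}$, $P_{a(m-1)+b}$ for a suitable offset. The natural tool is the third-order recurrence $P_n = \rho_a P_{n-a} + \sigma_a P_{n-2a} + P_{n-3a}$ proved in the earlier theorem, valid for every $a < n$. Writing $n = am+b$, I would apply this recurrence directly: since $n-a = a(m-1)+b$, $n-2a = a(m-2)+b$, and $n-3a = a(m-3)+b$, the recurrence reads
\begin{equation*}
P_{am+b} = \rho_a\, P_{a(m-1)+b} + \sigma_a\, P_{a(m-2)+b} + P_{a(m-3)+b},
\end{equation*}
which exhibits $P_n$ as a linear combination (with coefficients $\rho_a, \sigma_a, 1$) of exactly three consecutive entries of the $b$th column. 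This already proves the first claim once one checks that $m \geq 3$, so that the three lower entries actually lie in the table; for smaller $m$ one falls back on the extension of the recurrence to negative subscripts noted after the earlier theorems.

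For the second assertion, I would iterate the recurrence downward until the three terms appearing are the first three entries $P_{2a+b}$, $P_{a+b}$, $P_b$ of the column, i.e. the cases $m=2,1,0$. The clean way to organize this is to define, for each $m$, a coefficient vector expressing $P_{am+b}$ in the basis $\{P_{2a+b}, P_{a+b}, P_b\}$, and to observe that passing from index $m$ to $m+1$ is governed by the companion matrix
\begin{equation*}
M = \left[\begin{array}{ccc} \rho_a & \sigma_a & 1 \\ 1 & 0 & 0 \\ 0 & 1 & 0 \end{array}\right],
\end{equation*}
whose characteristic polynomial is $x^3 - \rho_a x^2 - \sigma_a x - 1$, the characteristic equation of the $a$-step recurrence. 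Applying the recurrence repeatedly, or equivalently computing $M^{\,m-2}$ acting on the initial vector $(P_{2a+b}, P_{a+b}, P_b)^{\mathsf T}$, yields $P_{am+b}$ as an explicit $\mathbb{Z}$-linear combination of the three first-column terms, since all entries of $M$ are integers.

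The main obstacle I expect is bookkeeping with the offsets and the lower end of the recursion rather than anything conceptually deep: one must verify that the downward iteration terminates correctly at $P_b$, that the intermediate subscripts $a(m-j)+b$ with small or negative $m-j$ are handled by the negative-subscript extension of $P_n$, and that the boundary cases (small $m$, or $b=a$ where the $b$th column coincides with the last column) do not break the indexing. Since $\rho_a$ and $\sigma_a$ satisfy the Padovan-type recurrences established earlier and are explicitly $\rho_a = 3P_{a-2}-P_{a-4}$ and $\sigma_a = -\rho_{-a}$, the resulting coefficients can even be written in closed form, but for the present statement it suffices to note that they are the integer entries of $M^{\,m-2}$ in its first row.
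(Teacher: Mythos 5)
Your proposal is correct and takes essentially the same route as the paper: both apply the $a$-step recurrence $P_{n}=\rho _{a}P_{n-a}+\sigma _{a}P_{n-2a}+P_{n-3a}$ with $n=am+b$ to exhibit $P_{n}$ as a combination of three consecutive entries of the $b$th column, and then iterate that recurrence downward until only $P_{2a+b}$, $P_{a+b}$ and $P_{b}$ remain. Your companion-matrix $M^{\,m-2}$ formulation is merely a tidier bookkeeping of the paper's repeated substitution of the recurrence into the leading term.
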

\begin{proof}
Let $P_{n}=\rho _{a}P_{n-a}+\sigma _{a}P_{n-2a}+P_{n-3a}$ in above theorem.
Then,%
\begin{eqnarray*}
P_{at+b} &=&\rho _{a}P_{a(t-1)+b}+\sigma _{a}P_{a(t-2)+b}+P_{_{a(t-3)+b}} \\
&=&\rho _{a}\left( \rho _{a}P_{a(t-2)+b}+\sigma
_{a}P_{a(t-3)+b}+P_{_{a(t-4)+b}}\right) \\
&&+\sigma
_{a}P_{a(t-2)+b}+P_{_{a(t-3)+b}} \\
&=&\left( \rho _{a}^{2}+\sigma _{a}\right) P_{a(t-2)+b}+\left( \rho
_{a}\sigma _{a}+1\right) P_{a(t-3)+b}+\rho _{a}P_{_{a(t-4)+b}}
\end{eqnarray*}

Continue this process, then it follows that $P_{n}$ is a linear combination
of $P_{2a+b},$ $P_{a+b}$ and $P_{b}$.
\end{proof}

For example, for $P_{38}$ we may take any $a<38$, say $a=7$. Since $(\rho
_{7},\sigma _{7})=(7,1)$, $P_{38}$ can be obtained easily by above theorem that 
\begin{eqnarray*}
P_{38} &=&7P_{31}+P_{24}+P_{17} \\
&=&(7^{2}+1)P_{24}+(7+1)P_{17}+7P_{10} \\
&=&50P_{24}+8P_{17}+7P_{10} \\
&=&50\left( 7P_{17}+P_{10}+P_{3}\right) +8P_{17}+7P_{10} \\
&=&358P_{17}+57P_{10}+50P_{3} \\
&=&358\cdot 86+57\cdot 12+50\cdot 2 \\
&=&\allowbreak 31\,572.
\end{eqnarray*}

However, since $P_{n}$ is composed of $P_{n-a},$ $P_{n-2a}$ and $P_{n-3a}$,
it may be better to choose $a\approx \frac{n}{3}.$ Indeed if we take $\frac{%
56}{3}\approx 18=a$, then $P_{56}=\rho _{18}P_{38}-\rho _{-18}P_{20}+P_{2}$
and the last term $P_{2}$ is known easily.

\begin{remark}
Assume the same context $(\rho _{a},\sigma _{a})$ as before. If $n=3a$, then 
$P_{n}=\rho _{\frac{n}{3}}P_{\frac{2n}{3}}+\sigma _{\frac{n}{3}}P_{\frac{n}{3%
}}+1$ since $P_{0}=1.$ In the other hand, if $n=3a+1$ or $n=3a+2$ and $%
P_{1}=P_{2}=1,$ it follows that 
\begin{equation*}
P_{n}=\rho _{\left\lfloor \frac{n}{3}\right\rfloor }P_{\left\lfloor \frac{2n%
}{3}\right\rfloor +1}+\sigma _{\left\lfloor \frac{n}{3}\right\rfloor
}P_{\left\lfloor \frac{n}{3}+\frac{1}{2}\right\rfloor +1}+1
\end{equation*}
\end{remark}

For example, if $n=26,$ we have $P_{26}=\rho _{8}P_{18}+\sigma
_{8}P_{10}+1=1081.$
\section{Partial Sum of Padovan numbers in a row}
\begin{lemma}
For $m\geq 0$, we have $$\sum_{k=0}^{m}P_{4k}=\frac{1}{5}\left(
P_{4(m+1)}+4P_{4m}+P_{4(m-1)}-1\right).$$
\end{lemma}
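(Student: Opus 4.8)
The plan is to prove the identity by induction on $m$, the engine being the step-$4$ recurrence $P_{n}=2P_{n-4}+3P_{n-8}+P_{n-12}$ established in Lemma~\ref{lem1} (equivalently the $a=4$ row of Theorem~\ref{teo1}, with $(\rho_{4},\sigma_{4})=(2,3)$). Writing $a_{k}=P_{4k}$, this recurrence reads $a_{k}=2a_{k-1}+3a_{k-2}+a_{k-3}$, so the subsequence $\{P_{4k}\}$ is governed by a fixed linear three-term recurrence, which is exactly what makes a closed form of the stated shape plausible.

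First I would settle the base case $m=0$. The left side is $P_{0}=1$, while the right side is $\tfrac{1}{5}\left(P_{4}+4P_{0}+P_{-4}-1\right)$; reading $P_{4}=2$, $P_{0}=1$ and $P_{-4}=0$ from the extended table, this equals $\tfrac{1}{5}(2+4+0-1)=1$. The only point requiring a little care here is the appearance of the negative-index value $P_{-4}$, which is legitimate since $P_{n}$ has already been extended to all $n\in\mathbb{Z}$ via $P_{n}=P_{n+3}-P_{n+1}$.

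For the inductive step, assume the formula for $m$ and add $P_{4(m+1)}$ to both sides of $S_{m+1}=S_{m}+P_{4(m+1)}$, where $S_{m}=\sum_{k=0}^{m}P_{4k}$. After clearing the factor $\tfrac{1}{5}$, the claim for $m+1$ reduces to the single identity
\begin{equation*}
P_{4(m+2)}=2P_{4(m+1)}+3P_{4m}+P_{4(m-1)},
\end{equation*}
which is precisely the step-$4$ recurrence with $n=4(m+2)$. Thus the inductive step is immediate once the algebra is arranged so that the Padovan recurrence is what remains to be checked.

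I do not expect a genuine obstacle: the difficulty is purely organizational, namely aligning the telescoping so that the leftover is exactly the $a=4$ identity. An alternative, non-inductive route would be to solve $a_{k}=2a_{k-1}+3a_{k-2}+a_{k-3}$ directly and sum the resulting geometric-type terms, but this is longer and less transparent; the inductive argument keyed to Lemma~\ref{lem1} is the cleanest path. The one place to double-check is the boundary term at the bottom of the sum, i.e.\ that the constant $-1$ and the $P_{-4}=0$ contribution match up in the base case, since an off-by-one there would shift the additive constant in the closed form.
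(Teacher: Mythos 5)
Your proposal is correct and follows essentially the same route as the paper: induction on $m$, with the base case $m=0$ checked via the extended value $P_{-4}=0$, and the inductive step reducing (after clearing the factor $\tfrac{1}{5}$) to the step-$4$ recurrence $P_{4(m+2)}=2P_{4(m+1)}+3P_{4m}+P_{4(m-1)}$ from Lemma~\ref{lem1}. The only cosmetic difference is that the paper regroups the terms forward to exhibit the closed form, whereas you isolate the recurrence as the residual identity; the computation is identical.
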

\begin{proof}
If $m=0$, we have $P_{0}=1=\frac{1}{5}\left( P_{4}+4P_{0}+P_{-4}-1\right) .$
Now, if $a=4$ and $n=4m$, $P_{4m}=2P_{4(m-1)}+3P_{4(m-2)}+P_{4(m-3)}$. Assume $\sum_{k=0}^{m}P_{4k}=\frac{1}{5}\left(
P_{4(m+1)}+4P_{4m}+P_{4(m-1)}-1\right) $ is true. Then it follows that 
\begin{eqnarray*}
\sum_{k=0}^{m+1}P_{4k} &=&\sum_{k=0}^{m}P_{4k}+P_{4(m+1)} \\
&=&\frac{1}{5}\left( P_{4(m+1)}+4P_{4m}+P_{4(m-1)}-1\right) +P_{4(m+1)} \\
&=&\frac{1}{5}\left( \left( 2P_{4(m+1)}+3P_{4m}+P_{4(m-1)}\right)
+P_{4m}+4P_{4(m+1)}-1\right) \\
&=&\frac{1}{5}\left( P_{4(m+2)}+4P_{4(m+1)}+P_{4m}-1\right) .
\end{eqnarray*}
\end{proof}

\begin{remark}
This theorem is a sum of $4m$ subscripted Padovan numbers. But in our context, it can be explained as a sum of entries of $4$th column in the $4$ columns Padovan table. We now shall study the sum of entries of any $b$th column in the $4$ columns Padovan table.
\end{remark}

Consider $n=am+b$ or $P_{am+b}$ ($m\geq 0$ and $1\leq b\leq a$) as an entry
placed at the $(m+1)$th row and $b$th column in the table, and let%
\begin{equation*}
r_{m}^{(a,b)}=\sum_{k=0}^{m}P_{ak+b}=P_{b}+P_{a+b}+\cdots +P_{am+b}
\end{equation*}%
be the partial sum of $m+1$ entries of $b$th column.

\begin{theorem}
Consider $r_{m}^{(3,b)}$\ with $1\leq b\leq 3$. Then for $m\geq 3$,%
\begin{equation*}
r_{m}^{(3,b)}=3r_{m-1}^{(3,b)}-2r_{m-2}^{(3,b)}+r_{m-3}^{(3,b)}+1.
\end{equation*}
\end{theorem}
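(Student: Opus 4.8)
The plan is to reduce everything to the $a=3$ instance of the third-order recurrence already in hand, namely $P_{3k+b}=3P_{3(k-1)+b}-2P_{3(k-2)+b}+P_{3(k-3)+b}$, which is just Lemma~\ref{lem1}'s identity $P_{n}=3P_{n-3}-2P_{n-6}+P_{n-9}$ written with $n=3k+b$. Since $m\geq 3$, I would first split the partial sum into the first three entries of the $b$th column, on which the recurrence cannot be pushed down, and the remaining tail, on which it can: $r_{m}^{(3,b)}=\bigl(P_{b}+P_{b+3}+P_{b+6}\bigr)+\sum_{k=3}^{m}P_{3k+b}$. Note that for $1\leq b\leq 3$ and $k\geq 3$ all three lagged indices $3(k-1)+b,\,3(k-2)+b,\,3(k-3)+b$ are nonnegative (the smallest being $P_{b}$), so no appeal to the negative-index extension is needed.

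Next I would substitute the recurrence into the tail and re-index the three resulting sums by $j=k-1,\,k-2,\,k-3$, turning them into $\sum_{j=2}^{m-1}$, $\sum_{j=1}^{m-2}$ and $\sum_{j=0}^{m-3}$ of $P_{3j+b}$. Rewriting each in terms of $r_{m-1}^{(3,b)},\,r_{m-2}^{(3,b)},\,r_{m-3}^{(3,b)}$ (restoring the dropped $j=0,1$ terms) collapses the tail to
\[
\sum_{k=3}^{m}P_{3k+b}=3r_{m-1}^{(3,b)}-2r_{m-2}^{(3,b)}+r_{m-3}^{(3,b)}-P_{b}-3P_{b+3}.
\]
Adding back the three isolated entries $P_{b}+P_{b+3}+P_{b+6}$ then yields $r_{m}^{(3,b)}=3r_{m-1}^{(3,b)}-2r_{m-2}^{(3,b)}+r_{m-3}^{(3,b)}+\bigl(P_{b+6}-2P_{b+3}\bigr)$, so the entire statement reduces to identifying the additive constant $P_{b+6}-2P_{b+3}$.

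Finally, I would verify directly from the Padovan values that this constant equals $1$ for each admissible column: for $b=1$, $P_{7}-2P_{4}=5-4=1$; for $b=2$, $P_{8}-2P_{5}=7-6=1$; and for $b=3$, $P_{9}-2P_{6}=9-8=1$. The only real subtlety is the bookkeeping of the boundary terms under re-indexing—tracking exactly which low-index entries are dropped and which are restored—together with the (pleasant) observation that the constant takes the same value $1$ across all three columns $b=1,2,3$, which is precisely what makes the clean inhomogeneous recurrence hold uniformly. An equivalent route would be induction on $m$ with base case $m=3$, propagating via $r_{m+1}^{(3,b)}-r_{m}^{(3,b)}=P_{3(m+1)+b}$ and the same $a=3$ recurrence; I prefer the direct summation above because it produces the constant $P_{b+6}-2P_{b+3}$ explicitly rather than assuming it.
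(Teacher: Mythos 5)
Your proof is correct, but it takes a genuinely different route from the paper. The paper proves the statement by induction on $m$: it verifies the base case $m=3$ numerically from the $3$ columns Padovan table (e.g.\ $r_{3}^{(3,1)}=20=3\cdot 8-2\cdot 3+1+1$), then propagates via $r_{m+1}^{(3,1)}=r_{m}^{(3,1)}+P_{3(m+1)+1}$ together with the $a=3$ recurrence $P_{n}=3P_{n-3}-2P_{n-6}+P_{n-9}$, writing out only the case $b=1$ and declaring $b=2,3$ similar --- this is precisely the alternative route you mention and set aside at the end. Your direct summation over the tail $\sum_{k=3}^{m}P_{3k+b}$, with the re-indexing
\begin{equation*}
\sum_{k=3}^{m}P_{3k+b}=3\bigl(r_{m-1}^{(3,b)}-P_{b}-P_{b+3}\bigr)-2\bigl(r_{m-2}^{(3,b)}-P_{b}\bigr)+r_{m-3}^{(3,b)},
\end{equation*}
is sound (I checked the boundary bookkeeping and the index positivity claim), and it buys two things the paper's argument does not: it treats all three columns $b=1,2,3$ uniformly in a single computation, and it \emph{derives} the additive constant as $P_{b+6}-2P_{b+3}=P_{b+4}-P_{b+3}=1$ rather than observing it from a table, which explains why the same value $1$ occurs for every $b$. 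This transparency also makes the generalization mechanical: running your computation with $a=4$ gives the constant $P_{b+8}-P_{b+4}-4P_{b}$, which evaluates to $2,4,3,6$ for $b=1,2,3,4$, exactly the constants appearing in the paper's subsequent theorem on $r_{m}^{(4,b)}$, where the paper again has to read each constant off a table. The trade-off is that the paper's induction is shorter per step and closer in spirit to the earlier proofs in the paper, while yours requires careful tracking of the dropped and restored low-index terms.
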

\begin{proof}
The $3$ columns Padovan table makes the table of $r_{m}^{(3,b)}$ as follows. 
\begin{equation*}
\left|
\begin{tabular}{lll}
$1$ & $1$ & $2$  \\ 
$2$ & $3$ & $4$  \\ 
$5$ & $7$ & $9$  \\ 
$12$ & $16$ & $21$  \\ 
$28$ & $37$ & $\cdots $%
\end{tabular}%
\right| \text{ and 
\begin{tabular}{l|lll}
\hline
$m$ & $r_{m}^{(3,1)}$ & $r_{m}^{(3,2)}$ & $r_{m}^{(3,3)}$ 
\\ \hline
$0$ & $1$ & $1$ & $2$  \\ 
$1$ & $3$ & $4$ & $6$  \\ 
$2$ & $8$ & $11$ & $15$  \\ 
$3$ & $20$ & $27$ & $36$  \\ 
$4$ & $48$ & $64$ & $85$  \\ \hline
\end{tabular}%
.}
\end{equation*}%
When $m=3$, we notice $20=\left( 3\cdot 8-2\cdot 3+1\right) +1$, and it
can be written as $r_{3}^{(3,1)}=3r_{2}^{(3,1)}-2r_{1}^{(3,1)}+r_{0}^{(3,1)}+1$. Similarly for $b=2,3$.

Assume that $%
r_{m}^{(3,1)}=3r_{m-1}^{(3,1)}-2r_{m-2}^{(3,1)}+r_{m-3}^{(3,1)}+1.$ Then by
induction hypothesis yields %
\begin{eqnarray*}
r_{m+1}^{(3,1)}&=&\left(3r_{m-1}^{(3,1)}-2r_{m-2}^{(3,1)}+r_{m-3}^{(3,1)}+1\right)+P_{3(m+1)+1} \\
&=&3r_{m-1}^{(3,1)}-2r_{m-2}^{(3,1)}+r_{m-3}^{(3,1)}+1+
3P_{3m+1}-2P_{3(m-1)+1}+P_{3(m-2)+1} \\
&=&3r_{m}^{(3,1)}-2r_{m-1}^{(3,1)}+r_{m-2}^{(3,1)}+1,
\end{eqnarray*}%
this proves the equation if $b=1$. The other cases are similar.
\end{proof}

\begin{theorem}
Consider $r_{m}^{(4,b)}$\ with $1\leq b\leq 4$. Then for $m\geq 3$,%
\begin{equation*}
r_{m}^{(4,b)}=\left\{ 
\begin{array}{ccc}
2r_{m-1}^{(4,b)}+3r_{m-2}^{(4,b)}+r_{m-3}^{(4,b)}+2 & if & b=1, \\ 
2r_{m-1}^{(4,b)}+3r_{m-2}^{(4,b)}+r_{m-3}^{(4,b)}+4 & if & b=2, \\ 
2r_{m-1}^{(4,b)}+3r_{m-2}^{(4,b)}+r_{m-3}^{(4,b)}+3 & if & b=3, \\ 
2r_{m-1}^{(4,b)}+3r_{m-2}^{(4,b)}+r_{m-3}^{(4,b)}+6 & if & b=4.%
\end{array}%
\right.
\end{equation*}
\end{theorem}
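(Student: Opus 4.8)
The plan is to imitate the proof of the preceding theorem on $r_m^{(3,b)}$, replacing the $a=3$ recurrence by the $a=4$ recurrence $P_n=2P_{n-4}+3P_{n-8}+P_{n-12}$ of Lemma \ref{lem1}. Read along the $b$th column of the $4$ columns table, this relation becomes $P_{4k+b}=2P_{4(k-1)+b}+3P_{4(k-2)+b}+P_{4(k-3)+b}$, so the coefficients $(2,3,1)=(\rho_4,\sigma_4,1)$ of the asserted recurrence are exactly the coefficients of that relation; only the additive constant $c_b\in\{2,4,3,6\}$ is new and must be pinned down separately for each $b$.

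First I would establish the four base cases. Using the displayed table of $r_m^{(4,b)}$ values (or computing $r_0,r_1,r_2,r_3$ directly from the $4$ columns Padovan table), I would check that $r_3^{(4,b)}-2r_2^{(4,b)}-3r_1^{(4,b)}-r_0^{(4,b)}$ equals $2,4,3,6$ for $b=1,2,3,4$ respectively; this is what fixes the constant $c_b$. Then comes the inductive step, carried out uniformly in $b$. Assuming $r_m^{(4,b)}=2r_{m-1}^{(4,b)}+3r_{m-2}^{(4,b)}+r_{m-3}^{(4,b)}+c_b$, I would write $r_{m+1}^{(4,b)}=r_m^{(4,b)}+P_{4(m+1)+b}$ and substitute the induction hypothesis to obtain
\begin{equation*}
r_{m+1}^{(4,b)}=\left(2r_{m-1}^{(4,b)}+3r_{m-2}^{(4,b)}+r_{m-3}^{(4,b)}+c_b\right)+P_{4(m+1)+b}.
\end{equation*}
Applying the column recurrence $P_{4(m+1)+b}=2P_{4m+b}+3P_{4(m-1)+b}+P_{4(m-2)+b}$ and then absorbing each Padovan term into the neighbouring partial sum by the telescoping identities $r_m^{(4,b)}=r_{m-1}^{(4,b)}+P_{4m+b}$, $r_{m-1}^{(4,b)}=r_{m-2}^{(4,b)}+P_{4(m-1)+b}$ and $r_{m-2}^{(4,b)}=r_{m-3}^{(4,b)}+P_{4(m-2)+b}$ yields
\begin{equation*}
r_{m+1}^{(4,b)}=2r_m^{(4,b)}+3r_{m-1}^{(4,b)}+r_{m-2}^{(4,b)}+c_b,
\end{equation*}
which is the asserted recurrence with $m$ replaced by $m+1$ and the \emph{same} constant $c_b$.

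There is no serious obstacle: the argument is a routine induction identical in shape to the $r_m^{(3,b)}$ case, and the only point worth care is confirming that the constant is genuinely preserved across the step rather than drifting with $m$. This is precisely the content of the cancellation above: the quantity $D_m:=r_m^{(4,b)}-2r_{m-1}^{(4,b)}-3r_{m-2}^{(4,b)}-r_{m-3}^{(4,b)}$ satisfies $D_{m+1}-D_m=P_{4(m+1)+b}-2P_{4m+b}-3P_{4(m-1)+b}-P_{4(m-2)+b}=0$ by the $a=4$ recurrence, so $D_m$ is constant for $m\geq3$ and equals its base value $c_b$. Finally I would note that for $m\geq3$ and $1\leq b\leq4$ all subscripts appearing stay nonnegative, so Lemma \ref{lem1} is applicable at every step and the induction is valid.
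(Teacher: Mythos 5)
Your proposal is correct and follows essentially the same route as the paper: verify the constants $c_b$ from the table of $r_m^{(4,b)}$ values, then induct on $m$ using the $a=4$ recurrence $P_n=2P_{n-4}+3P_{n-8}+P_{n-12}$ and absorb each Padovan term into the adjacent partial sum. Your base-case check at $m=3$ (rather than the paper's observation at $m=4$) and the constancy-of-$D_m$ reformulation are minor refinements of the identical argument.
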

\begin{proof}
The $4$ columns Padovan table makes the table of $r_{m}^{(4,b)}$ as follows. 
\begin{equation*}
\left|
\begin{tabular}{llll}
$1$ & $1$ & $2$ & $2$ \\ 
$3$ & $4$ & $5$ & $7$ \\ 
$9$ & $12$ & $16$ & $21$ \\ 
$28$ & $37$ & $49$ & $65$ \\ 
$86$ & $114$ & $151$ & $\cdots $%
\end{tabular}%
\right| \text{ and 
\begin{tabular}{l|llll}
\hline
$m$ & $r_{m}^{(4,1)}$ & $r_{m}^{(4,2)}$ & $r_{m}^{(4,3)}$ & $r_{m}^{(4,4)}$
\\ \hline
$0$ & $1$ & $1$ & $2$ & $2$ \\ 
$1$ & $4$ & $5$ & $7$ & $9$ \\ 
$2$ & $13$ & $17$ & $23$ & $30$ \\ 
$3$ & $41$ & $54$ & $72$ & $95$ \\ 
$4$ & $127$ & $168$ & $223$ & $295$ \\ \hline
\end{tabular}%
.}
\end{equation*}%
When $m=4$, we notice $127=\left( 2\cdot 41+3\cdot 13+4\right) +2$, and it
can be written as 
\begin{equation*}
r_{4}^{(4,1)}=2r_{3}^{(4,1)}+3r_{2}^{(4,1)}+r_{1}^{(4,1)}+2.
\end{equation*}%
Similar to this, we observe that 
\begin{equation*}
\left\{ 
\begin{array}{c}
2r_{3}^{(4,2)}+3r_{2}^{(4,2)}+r_{1}^{(4,2)}+4 \\ 
2r_{3}^{(4,3)}+3r_{2}^{(4,3)}+r_{1}^{(4,3)}+3 \\ 
2r_{3}^{(4,4)}+3r_{2}^{(4,4)}+r_{1}^{(4,4)}+6%
\end{array}%
\right. .
\end{equation*}%
Assume that $%
r_{m}^{(4,1)}=2r_{m-1}^{(4,1)}+3r_{m-2}^{(4,1)}+r_{m-3}^{(4,1)}+2.$ Then by
induction hypothesis yields %
\begin{eqnarray*}
r_{m+1}^{(4,1)}&=&\sum_{k=0}^{m}P_{4k+1}+P_{4(m+1)+1} \\
&=&2r_{m-1}^{(4,1)}+3r_{m-2}^{(4,1)}+r_{m-3}^{(4,1)}+2+P_{4(m+1)+1} \\
&=&2r_{m-1}^{(4,1)}+3r_{m-2}^{(4,1)}+r_{m-3}^{(4,1)}+2+
2P_{4m+1}+3P_{4(m-1)+1}+P_{4(m-2)+1} \\
&=&2r_{m}^{(4,1)}+3r_{m-1}^{(4,1)}+r_{m-2}^{(4,1)}+2,
\end{eqnarray*}%
this proves the first equation. Similarly, other relationships are followed.
\end{proof}


\begin{thebibliography}{...}

\bibitem{Cer} 
G. Cerda-Morales, \emph{Identities for Third Order Jacobsthal Quaternions}, Advances in Applied Clifford Algebras 27(2) (2017), 1043--1053.
\bibitem{Cer1} 
G. Cerda-Morales, \emph{On a Generalization of Tribonacci Quaternions}, Mediterranean Journal of Mathematics 14:239 (2017), 1--12.
\bibitem{Choi} E. Choi, \textit{Modular tribonacci numbers by matrix method}, J. Korean Soc. Mathe. Educ., Pure Appl. Math. 20 (2013), no. 3, 207-221.
\bibitem{Choi-Jo} E. Choi and J. Jo, \textit{Identities involving Tribonacci numbers}, Journal of the Chungcheong Mathematical Society, 28(1) (2015), 39-51.
\bibitem{Fen} J. Feng, \textit{More identities on the Tribonacci numbers}, Ars Comb. 100 (2011), 73-78.
\bibitem{Irm-Alp} N. Irmak and M. Alp, \textit{Tribonacci numbers with indices in arithmetic progression and their sums}, Miskolc Math. Notes 14 (2013), 125-133.
\bibitem{Kil} E. Kilic, \textit{Tribonacci sequences with certain indices and their sums}, Ars. Comb. 86 (2008), 13-22.
\bibitem{Sha-And-Hor} A. G. Shannon, P.G. Anderson, A.F. Horadam, \textit{Properties of Cordonnier, Perrin and Van der Laan numbers}, International Journal of Mathematical Education in Science and Technology, 37(7) (2006), 825-831.
\bibitem{Sha-Hor} A. G. Shannon and A. F. Horadam, \textit{Generating functions for powers of third order recurrence sequences}, Duke Mathematical Journal, vol.38 (1971), 791-794.
\end{thebibliography}
\end{document}